\newfont{\smcaps}{cmcsc10 scaled\magstep1}
\newcommand{\VARMA}{{\rm VARMA\,}}
\newcommand{\AR}{{\rm AR\,}}
\newcommand{\VAR}{{\rm VAR\,}}
\newcommand{\VEC}{{\rm ~vec\,}}
\newcommand{\diag}{{\rm ~diag\,}}
\newcommand{\tr}{{\rm ~tr\,}}
\newcommand{\Cov}{{\rm ~Cov\,}}
\begin{document}

\title{IMPROVED MULTIVARIATE PORTMANTEAU TEST}
\date{\today}
\author{{Esam Mahdi and A. Ian McLeod}\\
{\it The University of Western Ontario\/}
}

\maketitle
\bigskip
\hrule
\bigskip
{\noindent
Corresponding Author:\\
A.I. McLeod\\
Department of Statistical and Actuarial Sciences,\\
The University of Western Ontario,\\
London, Ontario N6A 5B7\\
Canada\\
email: aimcleod@uwo.ca.
}

\newpage
\baselineskip=22pt
\hrule
\bigskip
{\bf Abstract.\/}
A new portmanteau diagnostic test for vector autoregressive
moving average (\VARMA) models that
is based on the determinant of the
standardized multivariate residual autocorrelations is derived.
The new test statistic may be considered an extension of the univariate portmanteau test statistic
suggested by \citet{PR2002}.
The asymptotic distribution of the test statistic is derived as well
as a chi-square approximation.
However, the Monte-Carlo test is recommended unless the series is very long.
Extensive simulation experiments demonstrate the usefulness
of this test as well as its improved power performance compared
to widely used previous multivariate portmanteau diagnostic check.
Two illustrative applications are given.

\bigskip
{\bf Keywords:\/}
Diagnostic checking;
multivariate time series;
parallel computing;
Monte-Carlo significance test;
residual autocorrelation function;
\VARMA models.

\newpage

\section{Introduction}\label{secIntroduction}

The \VARMA$(p,q)$ model for a $k$-dimensional mean zero time series
$\bm{Z}_{t}=(Z_{1,t},\ldots,Z_{k,t})^{{\prime}}$
can be written as
\begin{equation}\label{VARMA}
\bm{\Phi(B)Z}_{t}=\bm{\Theta(B) a}_{t},
\end{equation}
where
$\bm{\Phi(B)}=\mathbb{I}_{k}-\bm{\Phi}_{1}\bm{B}-\cdots-\bm{\Phi}_{p}{\bm{B}}^{p},
\bm{\Theta(B)}=\mathbb{I}_{k}-\bm{\Theta}_{1}\bm{B}-\cdots-\bm{\Theta}_{q}{\bm{B}}^{q}$,
$\mathbb{I}_{k}$ is the identity matrix of order $k$,
the coefficient matrices are,
$\bm{\Phi}_\mathit{\ell}=(\phi_{i,j,\ell})_{k\times k}, \ell=1,\ldots,p$;
$\bm{\Theta}_\ell=(\theta_{i,j,\ell})_{k\times k}, \ell=1,\ldots,q$
and $\bm{B}$ is the backshift operator on $t$.
Let
$\bm{\beta} = ( \text{vec}\ \bm{\Phi}_1, \ldots, \text{vec}\ \bm{\Phi}_p, \text{vec}\ \bm{\Theta}_1,\ldots, \text{vec}\ \bm{\Theta}_q)$
be the vector of true parameters, where \text{vec} denotes the matrix vectorization function.
We assume that an efficient estimation
algorithm such as maximum likelihood is used to produce the corresponding estimate
$\hat{\bm{\beta}}$
so that $\hat{\bm{\beta}}-\bm{\beta} = O_p(n^{-1/2})$.
The white noise process,
$\bm{a}_{t}=(a_{1,t},\ldots,a_{k,t})^{{\prime}}$,
is assumed independent normal with mean zero and covariance matrix,
$E(\bm{a}_{t}\bm{a}_{t-\ell}^{\prime}) = \delta_\ell \bm{\Gamma}_{0}$,
where
$\bm{\Gamma}_{0}$
is the innovation covariance matrix and
$\delta_\ell = 1$ or $0$ according as $\ell = 0$ or $\ell \ne 0$.
The assumption of normality may be relaxed to that of strong white noise so
that $\bm{a}_{t}, t=1,\ldots,n$ are assumed to be independent and identically
distributed with mean zero and constant covariance matrix, $\bm{\Gamma}_{0}$.
The model is assumed to be stationary,
invertible, and identifiable \citep[\S 14.2]{Box2008}.
After fitting this model to a series of length $n$, the residuals, $\bm{\Hat{a}}_{t}=(\Hat{a}_{1,t},\ldots,\Hat{a}_{k,t})^{{\prime}}$,
$t = 1,\ldots,n$ may be estimated
and used to check the model assumption that the innovations are white noise, that is,
to test the null hypothesis that
\begin{equation}\label{nullHypothesis}
{\cal H}_0: \bm{\Gamma}_{\ell} = 0,\ \ell=1,\ldots,m,
\end{equation}
where $\bm{\Gamma}_{\ell} = \Cov\{\bm{a}_{t}, \bm{a}_{t-\ell}\}$
and $m$ is chosen large enough to cover all lags, $\ell$, of interest.
Several versions of the multivariate portmanteau test have been developed for this purpose \citep{Li2004}.

In the next two subsections, brief reviews are given of previous multivariate portmanteau tests
as well as the univariate versions of the generalized variance test of \citet{PR2002, PR2006}.
In Section~\ref{secNew}, the multivariate extension of the generalized variance test
of \citet{PR2002} is discussed and its asymptotic distribution is derived.
As in the univariate case \citep[eqn. (9)]{PR2002},
it is shown in eqn. (\ref{NewDecompositionPartB})
that the stronger the multivariate autocorrelation,
the smaller the generalized variance.
A chi-square approximation is suggested but
for most purposes it is recommended to use
a Monte-Carlo testing procedure that is described in Section \ref{subMonte}.
Simulation experiments in Section~\ref{secSimulation},
demonstrate the improvement in power over the widely used previous multivariate portmanteau test.
Illustrative applications are discussed is Section~\ref{secApplication}.

\subsection{Multivariate portmanteau tests}\label{Multivariate}

The portmanteau test statistics, $Q_m$ and $\Tilde{Q}_m$ and others, discussed
in this section are all asymptotically $\chi^2_{k^2(m-p-q)}$
as $n \rightarrow \infty$. It is also assumed that $m>p+q$ is fixed and that
$m$ large enough so that Theorem 5 in \citet{Li1981} holds.

\citet{Hosking1980B} defined the residual autocorrelation matrix,
\begin{equation}
\label{HostingRACF}
\Hat{\bm{R}}_{\mathit{\ell}} = \bm{\hat{L}}^{\prime}\Hat{\bm{\Gamma}}_{\mathit{\ell}}\bm{\hat{L}},
\end{equation}
where
$\Hat{\bm{\Gamma}}_\ell
=n^{-1}\sum_{t=\ell+1}^{n}\Hat{\bm{a}}_{t}{\Hat{\bm{a}}_{t-\ell}}^{\prime},
\Hat{\bm{\Gamma}}_{-\ell} = \Hat{\bm{\Gamma}}_{\ell}^{\prime},
\ell \ge 0
$
and $\bm{\hat{L}}$ is the lower triangular Cholesky decomposition of ${\Hat{\bm{\Gamma}}_{0}}^{-1}$.
The multivariate portmanteau test statistic may be written,
\begin{equation}\label{HoskingQ}
Q_m = n\sum_{\ell=1}^{m}{\Hat{\bm{r}}_\ell}^{{\prime}}({\Hat{\bm{R}}_{0}}^{-1}\otimes{\Hat{\bm{R}}_{0}}^{-1})\Hat{\bm{r}}_\ell,
\end{equation}
where
$\Hat{\bm{r}}_\ell=\text{vec}\ {\Hat{\bm{R}}_\ell}^{{\prime}}$
is a row vector of length $k^2$ formed by stacking the rows of $\Hat{\bm{R}}_\ell$,
and $m$ represents the number of lags being tested.
In the univariate case,  $Q_m$ is identical to Box-Pierce portmanteau statistic
\citep{Box1970}
and both statistics are asymptotically $\chi^2_{k^2(m-p-q)}$ \citep{Hosking1980B,Hosking1981B}.

\citet{Li1981} defined,
\begin{equation}\label{LiMcLeodRACF}
\Hat{\bm{R}}^{(\dag)}_{\ell}=(\Hat{r}_{i,j}(\ell))_{k\times k},
\end{equation}
where
$\Hat{r}_{i,j}(\ell) = \Hat{\gamma}_{i,j}(\ell)/\surd{(\Hat{\gamma}_{i,i}(0)\Hat{\gamma}_{j,j}(0))}$,
$i,j = 1,\ldots, k$ and
$\Hat{\gamma}_{i,j}(\ell)=n^{-1}\sum_{t=\ell+1}^{n}\Hat{a}_{i,t}\Hat{a}_{j,t-\ell}$,
$\Hat{\gamma}_{i,j}(-\ell) = \Hat{\gamma}_{j,i}(\ell), \ell \ge 0$.
Replacing $\Hat{\bm{R}}$ by $\Hat{\bm{R}}^{(\dag)}$ in eqn. (\ref{HoskingQ}), another
portmanteau test statistic $Q^{(\dag)}_m$ is obtained.
The null distribution of $Q^{(\dag)}_m$ is also asymptotically $\chi^2_{k^2(m-p-q)}$.
The definition of residual autocorrelations used in eqn. (\ref{HostingRACF})
is equivalent to the residual autocorrelations in eqn. (\ref{LiMcLeodRACF})
if the residuals used eqn. (\ref{LiMcLeodRACF}), $\Hat{\bm{a}}_{t}$, are replaced
by the standardized residuals, $\bm{\hat{L}}^{\prime} \Hat{\bm{a}}_{t}$.

\citet{Chitturi1974} defined the residual autocorrelation matrix at lag $\ell$,
\begin{equation}\label{ChitturiRACF}
\Hat{\bm{R}}^{(\ddag)}_\ell=\Hat{\bm{\Gamma}}_\ell{\Hat{\bm{\Gamma}}_{0}}^{-1},
\end{equation}
and another portmanteau test statistic $Q^{(\ddag)}_{m}$ is obtained by replacing
$\Hat{\bm{R}}$ by $\Hat{\bm{R}}^{(\ddag)}$ in eqn. (\ref{HoskingQ}), and
its null distribution is also asymptotically $\chi^2_{k^2(m-p-q)}$.

\citet{Hosking1981B} noted that
$Q_m = Q^{(\dag)}_{m} = Q^{(\ddag)}_{m}$ and the portmanteau test statistic may be expressed simply in terms of the
residual autocovariances,
\begin{equation}\label{generalMVportmanteauStat2}
Q_m=n\sum_{\ell=1}^{m} \tr({\Hat{\bm{\Gamma}}_{\mathit{\ell}}}^{{\prime}}{\Hat{\bm{\Gamma}}_{0}}^{-1}\Hat{\bm{\Gamma}}_{\mathit{\ell}}{\Hat{\bm{\Gamma}}_{0}}^{-1}),
\end{equation}
where \tr($\bullet$) denotes trace of matrix.
The multivariate portmanteau test statistic is equivalent to a test based on the
Lagrange multiplier \citep{Hosking1981A, Poskitt1982}.

\citet{Hosking1980B} and \citet{Li1981} suggested modified versions of $Q_m$
so that the expected value of the modified portmanteau statistic under the null hypothesis is
equal to $k^2(m-p-q)+O_p(1/n)$
and showed that both of these modifications are satisfactory when $n$ and $m$ are large enough.
Simulation experiments suggest that both these modified portmanteau tests work
about equally well \citep[\S 3]{Li2004}.

The modified portmanteau test of \citet{Hosking1980B} is given by,
\begin{equation}
\label{ModifiedMVTestHosking}
\Tilde{Q}_m
=
n^{2}\sum_{\ell=1}^{m}
\Hat{\bm{r}}_{\mathit{\ell}}^{{\prime}}({\Hat{\bm{R}}_{0}}^{-1}\otimes{\Hat{\bm{R}}_{0}}^{-1})\Hat{\bm{r}}_{\mathit{\ell}}/(n-\ell).
\end{equation}
In the univariate time series, the $\Tilde{Q}_m$ test statistic approximately equal
the Ljung-Box statistic \citep{Ljung1978}
and both statistics are asymptotically $\chi^2_{k^2(m-p-q)}$ \citep{Hosking1980B,Hosking1981B}.

\subsection{Univariate generalized variance portmanteau test}\label{Univariate}

\citet{PR2002} proposed a univariate portmanteau test statistic,
\begin{equation}\label{PenaRodriguezD}
\Hat{D}_m = n \left( 1-{\mid \mathcal{\Hat{R}}_{\mathit{m}}\mid}^{1/{\mathit{m}}} \right),
\end{equation}
where $\mid \bullet \mid$ denotes the determinant and
$\mathcal{\Hat{R}}_{\mathit{m}}$ is the residual correlation matrix of order $m+1$,
\begin{equation}\label{PenaRodrigueztoeplitz}
\mathcal{\hat{R}}_{\mathit{m}} = \left(
\begin{array}{cccc}
  1 & \hat{r}_{11}(1) & \ldots & \hat{r}_{11}(m) \\
  \hat{r}_{11}(1) & 1 & \ldots &\hat{r}_{11}(m-1) \\
  \vdots & \ldots & \ddots &  \vdots  \\
  \hat{r}_{11}(m)  & \hat{r}_{11}(m-1) & \dots & 1 \\
\end{array}
\right).
\end{equation}

\citet{PR2002} derived the asymptotic distribution of
$\Hat{D}_m$ as gamma using the standardized values of residual autocorrelations.
\citet[\S 2.7]{Li2004} noted several interesting interpretations for this statistic.
It was shown
in simulation experiments \citep{PR2002} that the $\Hat{D}_m$ statistic had better power than the
test of \citet{Ljung1978} in many situations.
One problem noted by \citet{Lin2006} is that the test statistic $\Hat{D}_m$
may not exist because, with the modified version of the
residual autocorrelations used, the residual autocorrelation sequence is not
always positive-definite or even non-negative definite.
Furthermore, the size of the test may not be accurate due to the asymptotic approximation
\citep[p. 19]{Li2004}.
To overcome these difficulties \citet{Lin2006} suggested using
a Monte-Carlo significance test
and demonstrated that this approach provides a test with
the correct size and is often more powerful than the usual Ljung-Box test
\citep[Table 6]{Lin2006}.

\citet{PR2006} suggested taking the log of the $(m+1)$th root of the
determinant in eqn.~(\ref{PenaRodrigueztoeplitz}),
\begin{equation}\label{PenaRodriguezTwo}
\Tilde{D}_m = -n(m+1)^{-1} \log{\mid \mathcal{\hat{R}}_{\mathit{m}}} \mid
\end{equation}
and they derived a gamma distribution approximation for this test statistic.

In the portmanteau tests based on the asymptotic distribution \citep{Ljung1978, PR2002, PR2006}
not only is the size of the test inaccurate if the series length $n$ is not large enough
but there is also a problem if $m$, the number of lags, is not large
enough as well.
The Monte-Carlo significance test approach does not require any such assumption about $m$ and
has much better finite-sample properties than
tests based on the asymptotic distribution.

\section{New Multivariate Portmanteau Test}\label{secNew}

The univariate residual autocorrelations in the Toeplitz matrix in
eqn. (\ref{PenaRodrigueztoeplitz})
are replaced by,
$\bm{\Hat{R}}_{\ell}, \ell= 1,\ldots, m$
in eqn. (\ref{HostingRACF}),
\begin{equation}\label{GV.mat1}
      \mathfrak{\Hat{\bm{R}}}_\mathit{m} = \left(%
\begin{array}{cccc}
  \mathbb{I}_k & \Hat{\bm{R}}_{1} & \ldots & \bm{\Hat{R}}_{\mathit{m}} \\
  \Hat{\bm{R}}^{\prime}_{1} & \mathbb{I}_k & \ldots &\Hat{\bm{R}}_{\mathit{m}-1} \\
  \vdots & \ldots & \ddots &  \vdots  \\
  \Hat{\bm{R}}^{\prime}_{\mathit{m}} & \Hat{\bm{R}}^{\prime}_{\mathit{m}-1}& \dots & \mathbb{I}_k \\
\end{array}%
\right),
\end{equation}
where $\mathbb{I}_k = \Hat{\bm{R}}_{0}$.
The proposed multivariate portmanteau test statistic is
\begin{equation}\label{NewD}
\mathfrak{D}_m = -n \log |\mathfrak{\Hat{\bm{R}}}_\mathit{m}|.
\end{equation}

From Hadamard's inequality for the determinant of a positive definite matrix,
$|\mathfrak{\Hat{\bm{R}}}_\mathit{m}|  \le 1$.
When there is no significant autocorrelation in the residuals,
$\Hat{\bm{R}}_{\ell} = O_p(n^{-\frac{1}{2}})$ so $\mathfrak{\Hat{\bm{R}}}_\mathit{m}$
is approximately block diagonal and hence
$|\mathfrak{\Hat{\bm{R}}}_\mathit{m}| \approx 1$.

On the other hand, when there is autocorrelation present, $|\mathfrak{\Hat{\bm{R}}}_\mathit{m}|$
will be expected to be smaller than $1$.
To see this we repeatedly apply the formula for the determinant of a partitioned matrix
\citep[\S 14.1]{Seber2008},
\begin{equation}
\label{NewDecomposition}
|\mathfrak{\Hat{\bm{R}}}_\mathit{m}|
  = \prod_{\ell=1}^{m}
 \mid \mathbb{I}_k -
 \mathfrak{\Hat{\bm{R}}}_{(\mathit{\ell})}
 \mathfrak{\Hat{\bm{R}}}^{-1}_{\mathit{\ell-1}}
 \mathfrak{\Hat{\bm{R}}}_{(\mathit{\ell})}^\prime \mid,
\end{equation}
where
$\mathfrak{\Hat{\bm{R}}}_{(\mathit{\ell})}
=   [\Hat{\bm{R}}_{1}:\cdots:\Hat{\bm{R}}_{\mathit{\ell}}]$
is the $k$-by-$\ell k$ block partitioned matrix.
Then
$\hat{\Sigma}_\ell = \mathbb{I}_k -
 \mathfrak{\Hat{\bm{R}}}_{(\mathit{\ell})}
 \mathfrak{\Hat{\bm{R}}}^{-1}_{\mathit{\ell-1}}
 \mathfrak{\Hat{\bm{R}}}_{(\mathit{\ell})}^\prime$
corresponds to the error covariance matrix when a linear predictor
of order $\ell$ is fit to  $\bm{\hat{L}}^{\prime} \Hat{\bm{a}}_{t}$
using the previous $\ell$ values \citep[eqn. (3.15)]{Reinsel1997}.
Thus, eqn. (\ref{NewDecomposition}) is a direct multivariate generalization of the well known
univariate decomposition of generalized variance into the product
of the one-step ahead variances of the linear minimum-mean-square error
predictors \citep[p. 532]{McLeod1977},
\begin{equation}\label{UnivariateMMSE}
\mid \mathcal{\Hat{R}}_{\mathit{m}}\mid =\prod_{\ell=1}^{m} \hat \sigma_\ell^2,
\end{equation}
where  $\Hat{\sigma}_\ell^2$ is the mean-square error for a fitted
linear predictor of order $\ell$.
In this case,  $R^2_\ell = 1-\hat \sigma_\ell^2$,
where $R_\ell^2$ is the square of the multiple correlation for
the order $\ell$ linear predictor,
and so \citep[eqn. (7)]{PR2002},
\begin{equation}\label{PRDecomposition}
\mid \mathcal{\Hat{R}}_{\mathit{m}}\mid =\prod_{\ell=1}^{m} (1-R_\ell^2).
\end{equation}
In the multivariate case,
\begin{equation}\label{MVR}
\hat{\eta}_\ell^2 = 1-
 \mid
 \mathbb{I}_k -
 \mathfrak{\Hat{\bm{R}}}_{(\mathit{\ell})}
 \mathfrak{\Hat{\bm{R}}}^{-1}_{\mathit{\ell-1}}
 \mathfrak{\Hat{\bm{R}}}_{(\mathit{\ell})}^{\prime}
 \mid
\end{equation}
is the proportion of the generalized variance that is accounted for by
a linear predictor of order $\ell$.
From eqns. (\ref{NewDecomposition}, \ref{MVR}),
the corresponding multivariate equivalent of eqn. (\ref{PRDecomposition}) is
\begin{equation}\label{NewDecompositionPartB}
|\mathfrak{\Hat{\bm{R}}}_\mathit{m}|
= \prod_{\ell=1}^{m} (1-\hat{\eta}_\ell^2).
\end{equation}
It follows from eqn. (\ref{NewDecompositionPartB}),
$|\mathfrak{\Hat{\bm{R}}}_\mathit{m}| < 1$
and that the smaller the value of
$|\mathfrak{\Hat{\bm{R}}}_\mathit{m}|$,
the more strongly autocorrelated the normalized
residuals,
$\bm{\hat{L}}^{\prime} \Hat{\bm{a}}_{t}$, are.

Using the \citet{Chitturi1974} multivariate residual autocorrelations, eqn. (\ref{ChitturiRACF}),
the correlation matrix corresponding to eqn. (\ref{GV.mat1}),
$\Hat{\mathfrak{\bm{R}}}^{(\ddag)}_\mathit{m}$,
is defined by the block matrix with $(i,j)$-block, $\Hat{\bm{R}}^{(\ddag)}_{i-j}$
for $i,j=1,\ldots,m+1$.
This matrix is not symmetric but
$ |\mathfrak{\Hat{\bm{R}}}_\mathit{m}|
=\mid \Hat{\mathfrak{\bm{R}}}^{(\ddag)}_\mathit{m}  \mid$,
so these multivariate autocorrelations could also be used.

Multivariate autocorrelations are often defined as in eqn. (\ref{LiMcLeodRACF})
\citep[eqn. (14.1.2)]{Box2008}.
Using this definition,
the residual autocorrelation matrix may be written,
\begin{equation}
\label{RLiMcLeod}
\Hat{\bm{R}}^{(\dag)}_{\ell} = \bm{\hat{D}}^{-1/2}\Hat{\bm{\Gamma}}_{\ell}\bm{\hat{D}}^{-1/2},
\end{equation}
where
$\bm{\hat{D}}^{-1/2} = \diag (\hat\gamma_{1,1}^{-1/2}(0),\ldots,\hat\gamma_{k,k}^{-1/2}(0))$.
The correlation matrix corresponding to eqn. (\ref{GV.mat1})
obtained by replacing $\Hat{\bm{R}}_{\ell}$ by $\Hat{\bm{R}}^{(\dag)}_{\ell}$
may be denoted by  ${\mathfrak{\Hat{\bm{R}}}}^{(\dag)}_\mathit{\ell}$
and the corresponding generalized variance portmanteau statistic,
$\mid {\mathfrak{\Hat{\bm{R}}}}^{(\dag)}_{\mathit{m}}\mid$.
A similar decomposition as given in eqn. (\ref{NewDecompositionPartB})
shows that small values $\mid {\mathfrak{\Hat{\bm{R}}}}^{(\dag)}_{\mathit{m}}\mid$
correspond to positive autocorrelation.
On the other hand, when there is no autocorrelation present,
the off-block diagonal entries in the matrix ${\mathfrak{\Hat{\bm{R}}}}^{(\dag)}_\mathit{m}$ are $O_p(n^{-1/2})$.
So, $ \mid {\mathfrak{\Hat{\bm{R}}}}^{(\dag)}_{\mathit{m}}\mid \approx  \mid\Hat{\bm{R}}^{(\dag)}_{0}\mid^{m+1}$.
When the innovation variance matrix, $\bm{\Gamma}_{0}$, has large off-diagonal elements,
$\mid\Hat{\bm{R}}^{(\dag)}_{0}\mid < 1$.
Hence again $\mid {\mathfrak{\Hat{\bm{R}}}}^{(\dag)}_{\mathit{m}}\mid = O_p(r^m)$ for some $r \in (0,1)$.
So, in both cases, autocorrelation or no autocorrelation,
$ \mid {\mathfrak{\Hat{\bm{R}}}}^{(\dag)}_{\mathit{m}}\mid$ tends to be small provided
the innovation covariance matrix is not diagonal.
Numerical experiments confirmed that
the test using $\mathfrak{D}^{(\dag)}_{m}$ and $\mathfrak{D}_{m}$ are essentially
equivalent when $\bm{\Gamma}_{0}$ is diagonal but in
the non-diagonal case, $\mathfrak{D}^{(\dag)}_{m}$ does not provide a useful test.

\subsection{Asymptotic distribution and approximation}\label{subAsyDis}

In this section, the asymptotic distribution for $\mathfrak{D}_m$
in eqn. (\ref{NewD}) is derived and an approximation to this
distribution is suggested.
Since, as shown in \citet[Figure 2]{Lin2006} in the univariate case by simulation,
the actual finite-sample distribution for $\mathfrak{D}_m$ converges slowly,
the asymptotic distribution for $\mathfrak{D}_m$  is not expected to be of much use in diagnostic
checking multivariate time series models unless $n$ is very large.

We use the following notation as in \citet[\S 4]{Hosking1980B},
$\bm{\Psi(B)}=\bm{\Phi(B)}^{-1}\bm{\Theta(B)}=\sum_{i=0}^{\infty}\Psi_iB^{i}$ and
$\bm{\Pi(B)}=\bm{\Theta(B)}^{-1}=\sum_{i=0}^{\infty}\Pi_i B^{i}$
are matrix power series such that the elements $\Psi_i$ and $\Pi_i$ converge
exponentially to zero as $i\rightarrow\infty$.
Define
\begin{equation}
\label{MatrixG}
\bm{G}= \left(%
\begin{array}{cccc}
  G_0 &0& \ldots &0 \\
  G_1 & G_0&\ldots&0 \\
  \vdots & \vdots& \ddots &  \vdots  \\
  G_{m-1} & G_{m-2}& \dots &G_{m-p} \\
\end{array}%
\right),
\end{equation}
and
\begin{equation}
\label{MatrixH}
\bm{H}= \left(%
\begin{array}{cccc}
  H_0 &0& \ldots &0 \\
  H_1 & H_0 &\ldots&0 \\
  \vdots & \vdots& \ddots &  \vdots  \\
  H_{m-1} & H_{m-2}& \dots &H_{m-q} \\
\end{array}%
\right),
\end{equation}
where
$G_r=\sum_{i=0}^{\infty}\bm{\Gamma}_{0}\Psi_i^{\prime}\otimes\Pi_{r-i}$
and
$H_r=\bm{\Gamma}_{0}\otimes\Pi_{r}$.

\theoremstyle{plain}\newtheorem{thm}{\smcaps{Theorem}}
\begin{thm}
Assume that the model specified in eqn. (\ref{VARMA})
has independent and identically distributed innovations with mean zero
and constant covariance matrix.
The model is fit to a series of length $n$ using
an $n^{-1/2}$-consistent algorithm.
After obtaining the residuals defined in eqn. (\ref{HostingRACF})
and the test statistic,  $\mathfrak{D}_m$, in eqn. (\ref{NewD}),
$\mathfrak{D}_m$ is asymptotically distributed as
$$\sum_{i=1}^{k^2m}\lambda_{i}\chi_{1,i}^2,$$
where
$\chi_{1,i}^2,~i=1,\ldots,k^2m$ are independent $\chi_{1}^2$ random variables
and $\lambda_{1},\ldots,\lambda_{k^2m}$
are the eigenvalues of $(\mathbb{I}_{k^2}-\bm{Q})\bm{M}$, where
$\bm{M}$ is $k^2m\times k^2m$ diagonal matrix
\begin{equation}\label{M}
\bm{M}= \left(%
\begin{array}{cccc}
  m\mathbb{I}_{k^2} & \bm{O} & \ldots &\bm{O} \\
  \bm{O} & (m-1)\mathbb{I}_{k^2} &\ldots&\bm{O} \\
  \vdots & \vdots & \ddots &  \vdots  \\
  \bm{O} & \bm{O}& \dots & \mathbb{I}_{k^2} \\
\end{array}%
\right),
\end{equation}
and
\begin{equation}\label{Q}
\bm{Q}=\bm{X(X^{\prime}W^{-1}X)^{-1}X^{\prime}W^{-1}}
\end{equation}
is an idempotent matrix with rank $k^2(p+q)$,
$\bm{X}$ is defined as $k^2m\times k^2(p+q)$ matrix $\bm{(G-H)}$,
and
$\bm{W}=\mathbb{I}_{m}\otimes\bm{\Gamma}_{0}\otimes\bm{\Gamma}_{0}$
is positive-definite symmetric.
\end{thm}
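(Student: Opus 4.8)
\emph{Proof strategy.} The plan is to reduce $\mathfrak{D}_m$ to a quadratic form in the standardized residual autocorrelations, import their joint limiting normal law from the classical $\VARMA$ residual theory, and then match eigenvalues. First I would write $\mathfrak{\hat{\bm{R}}}_m = \mathbb{I}_{k(m+1)}+\hat{\bm{E}}$, where the block matrix $\hat{\bm{E}}$ has vanishing diagonal blocks and off-diagonal $(i,j)$-block $\hat{\bm{R}}_{j-i}$ (with the convention $\hat{\bm{R}}_{-s}=\hat{\bm{R}}_s'$). Under $\mathcal{H}_0$ each $\hat{\bm{R}}_\ell=O_p(n^{-1/2})$, so $\hat{\bm{E}}=O_p(n^{-1/2})$ and $\log\det(\mathbb{I}+\hat{\bm{E}})=\tr(\hat{\bm{E}})-\tfrac12\tr(\hat{\bm{E}}^2)+O_p(n^{-3/2})$. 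Since the diagonal blocks of $\hat{\bm{E}}$ are zero, $\tr(\hat{\bm{E}})=0$, and a block multiplication gives $(\hat{\bm{E}}^2)_{ii}=\sum_{j\neq i}\hat{\bm{R}}_{j-i}\hat{\bm{R}}_{j-i}'$; collecting equal lags yields $\tfrac12\tr(\hat{\bm{E}}^2)=\sum_{\ell=1}^m(m+1-\ell)\tr(\hat{\bm{R}}_\ell\hat{\bm{R}}_\ell')$. With $\hat{\bm{r}}_\ell=\VEC\hat{\bm{R}}_\ell'$, $\hat{\bm{r}}=(\hat{\bm{r}}_1',\dots,\hat{\bm{r}}_m')'$ and $\tr(\hat{\bm{R}}_\ell\hat{\bm{R}}_\ell')=\hat{\bm{r}}_\ell'\hat{\bm{r}}_\ell$, the weights $m,m-1,\dots,1$ are precisely the diagonal of $\bm{M}$, so
\[
\mathfrak{D}_m = n\,\hat{\bm{r}}'\bm{M}\hat{\bm{r}} + o_p(1).
\]

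Next I would determine the joint limit of $\sqrt{n}\,\hat{\bm{r}}$. Setting $\hat{\bm{\gamma}}_\ell=\VEC\hat{\bm{\Gamma}}_\ell'$ and $\hat{\bm{\gamma}}=(\hat{\bm{\gamma}}_1',\dots,\hat{\bm{\gamma}}_m')'$, the classical first-order expansion of residual autocovariances for an efficiently estimated $\VARMA$ model \citep[\S 4]{Hosking1980B, Li1981} gives, under $\mathcal{H}_0$, $\sqrt{n}\,\hat{\bm{\gamma}}=(\mathbb{I}_{k^2m}-\bm{Q})\sqrt{n}\,\bm{\gamma}+o_p(1)$, where $\bm{\gamma}$ is the analogous vector formed from the true innovations, $\sqrt{n}\,\bm{\gamma}\rightarrow N(\bm{0},\bm{W})$, and the oblique projection $\bm{Q}$ enters through the efficient-estimator expansion $\sqrt{n}(\hat{\bm{\beta}}-\bm{\beta})=(\bm{X}'\bm{W}^{-1}\bm{X})^{-1}\bm{X}'\bm{W}^{-1}\sqrt{n}\,\bm{\gamma}+o_p(1)$ with $\bm{X}=\bm{G}-\bm{H}$ of full column rank $k^2(p+q)$ (this is where the ``$m$ large enough''/\citet{Li1981} rank condition is used); hence $\sqrt{n}\,\hat{\bm{\gamma}}\rightarrow N(\bm{0},(\mathbb{I}-\bm{Q})\bm{W})$. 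Because $\hat{\bm{R}}_\ell'=\bm{\hat{L}}'\hat{\bm{\Gamma}}_\ell'\bm{\hat{L}}$ gives $\hat{\bm{r}}_\ell=(\bm{\hat{L}}'\otimes\bm{\hat{L}}')\hat{\bm{\gamma}}_\ell$ and $\bm{\hat{L}}\rightarrow_p\bm{L}$, Slutsky's theorem yields $\sqrt{n}\,\hat{\bm{r}}=\bm{T}\sqrt{n}\,\hat{\bm{\gamma}}+o_p(1)$ with $\bm{T}=\mathbb{I}_m\otimes\bm{L}'\otimes\bm{L}'$, so $\sqrt{n}\,\hat{\bm{r}}\rightarrow N(\bm{0},\bm{T}(\mathbb{I}-\bm{Q})\bm{W}\bm{T}')$. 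The Cholesky identity $\bm{L}'\bm{\Gamma}_0\bm{L}=\mathbb{I}_k$ gives $\bm{T}\bm{W}\bm{T}'=\mathbb{I}_{k^2m}$, equivalently $\bm{T}'\bm{T}=\bm{W}^{-1}$, so with $\bm{Y}=\bm{T}\bm{X}$ one has $\bm{X}'\bm{W}^{-1}\bm{X}=\bm{Y}'\bm{Y}$ and therefore $\bm{T}(\mathbb{I}-\bm{Q})\bm{W}\bm{T}'=\mathbb{I}_{k^2m}-\bm{P}$, where $\bm{P}=\bm{Y}(\bm{Y}'\bm{Y})^{-1}\bm{Y}'=\bm{T}\bm{Q}\bm{T}^{-1}$ is a symmetric idempotent matrix of rank $k^2(p+q)$. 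Thus $\sqrt{n}\,\hat{\bm{r}}\rightarrow N(\bm{0},\mathbb{I}_{k^2m}-\bm{P})$.

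Finally, combining the two steps with the continuous mapping theorem gives $\mathfrak{D}_m\rightarrow\bm{z}'\bm{M}\bm{z}$ with $\bm{z}\sim N(\bm{0},\mathbb{I}_{k^2m}-\bm{P})$. Writing $\bm{z}=(\mathbb{I}-\bm{P})^{1/2}\bm{w}$ with $\bm{w}\sim N(\bm{0},\mathbb{I}_{k^2m})$ and diagonalizing the symmetric matrix $(\mathbb{I}-\bm{P})^{1/2}\bm{M}(\mathbb{I}-\bm{P})^{1/2}$ shows that the quadratic form equals $\sum_{i=1}^{k^2m}\mu_i\chi_{1,i}^2$ for independent $\chi_1^2$ summands, where $\mu_1,\dots,\mu_{k^2m}$ are the eigenvalues of that matrix, equivalently of $\bm{M}(\mathbb{I}-\bm{P})$. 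Since $\bm{M}=\diag(m,\dots,1)\otimes\mathbb{I}_{k^2}$ commutes with $\bm{T}$, we have $\bm{M}(\mathbb{I}-\bm{P})=\bm{M}-\bm{T}\bm{M}\bm{Q}\bm{T}^{-1}=\bm{T}\,\bm{M}(\mathbb{I}-\bm{Q})\,\bm{T}^{-1}$, so $\bm{M}(\mathbb{I}-\bm{P})$ is similar to $\bm{M}(\mathbb{I}-\bm{Q})$ and hence has the same eigenvalues as $(\mathbb{I}-\bm{Q})\bm{M}$; together with $\operatorname{rank}\bm{Q}=\tr\bm{Q}=k^2(p+q)$ and the immediate idempotency of $\bm{Q}$, this identifies the $\lambda_i$ of the statement with the $\mu_i$ and finishes the argument.

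I expect the genuinely delicate part to be the second step: the validity of the linear expansion of $\sqrt{n}\,\hat{\bm{\gamma}}$ in terms of $\sqrt{n}\,\bm{\gamma}$ and $\sqrt{n}(\hat{\bm{\beta}}-\bm{\beta})$ — in particular controlling the remainder uniformly over the $m$ lags and justifying replacing $\bm{\hat{L}}$ by $\bm{L}$ inside an $O_p(1)$ quantity — which rests on the $\VARMA$ residual-autocorrelation theory of \citet{Hosking1980B} and \citet{Li1981} and must be invoked carefully under stationarity, invertibility, identifiability, and $m>p+q$ large enough for the rank condition. The remaining ingredients — the matrix-logarithm expansion, the trace computation producing the weights $m,\dots,1$, and the Kronecker/commutation manipulations converting $\mathbb{I}-\bm{P}$ into a statement about $(\mathbb{I}-\bm{Q})\bm{M}$ — are elementary once that input is available.
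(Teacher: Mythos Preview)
Your argument is correct and reaches the same conclusion as the paper, but the execution differs in two places worth noting. For the reduction of $\mathfrak{D}_m$ to a weighted sum, the paper does \emph{not} expand $\log\det(\mathbb{I}+\hat{\bm{E}})$ globally; instead it iterates the partitioned-determinant identity to write $|\hat{\mathfrak{R}}_m|=\prod_{\ell=1}^m|\mathbb{I}_k-\bm{A}_\ell|$ with $\bm{A}_\ell=\hat{\mathfrak{R}}_{(\ell)}\hat{\mathfrak{R}}_{\ell-1}^{-1}\hat{\mathfrak{R}}_{(\ell)}'$, expands each factor through its eigenvalues, and then observes $\tr(\bm{A}_\ell)\approx\sum_{j\le\ell}\tr(\hat{\bm{R}}_j'\hat{\bm{R}}_j)$ to obtain the same weights $m,m-1,\dots,1$. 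Your single matrix-logarithm expansion plus the block computation of $\tr(\hat{\bm{E}}^2)$ is a cleaner route to the identical formula, while the paper's route has the advantage of reusing the predictor-variance decomposition already established in the text. For the distributional step, the paper stays in the autocovariance coordinates: it writes $\mathfrak{D}_m\approx n(\VEC\hat{\bm{\Gamma}})'\bm{W}^{-1}\bm{M}(\VEC\hat{\bm{\Gamma}})$, invokes Hosking's $\sqrt{n}\VEC\hat{\bm{\Gamma}}\to N(\bm{0},(\mathbb{I}-\bm{Q})\bm{W})$, and applies Box's quadratic-form theorem directly, so the eigenvalues of $(\mathbb{I}-\bm{Q})\bm{W}\cdot\bm{W}^{-1}\bm{M}=(\mathbb{I}-\bm{Q})\bm{M}$ fall out without any change of basis. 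Your detour through $\bm{T}=\mathbb{I}_m\otimes\bm{L}'\otimes\bm{L}'$ and the similarity $\bm{M}(\mathbb{I}-\bm{P})=\bm{T}\bm{M}(\mathbb{I}-\bm{Q})\bm{T}^{-1}$ is longer but makes the underlying orthogonal-projection structure explicit; the paper's version is shorter because $\bm{W}$ and $\bm{M}$ commute and cancel immediately.
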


\begin{proof}[\smcaps{Proof}]
From the decomposition in eqn. (\ref{NewDecomposition}),
it follows that,
\begin{equation}
\label{DetMatrixHosking2}
-n\log |\mathfrak{\Hat{\bm{R}}}_\mathit{m}|
   = -n\sum_{\ell=1}^{m}
 \log\mid
 \mathbb{I}_k - \bm{A}_{\ell}
 \mid,
\end{equation}
where
$\bm{A}_{\ell}=
\mathfrak{\Hat{\bm{R}}}_{(\mathit{\ell})}
\mathfrak{\Hat{\bm{R}}}^{-1}_{\mathit{\ell-1}}
\mathfrak{\Hat{\bm{R}}}_{(\mathit{\ell})}^{\prime}$.
Using the fact that $\mid \mathbb{I}_k - \bm{A}_{\ell} \mid=\prod_{i=1}^{k}(1-\lambda_{i}(\ell))$,
where $\lambda_{i}(\ell)$ are the eigenvalues of $\bm{A}_{\ell}$, $\ell=1,\ldots,m$,
\begin{equation}
\label{DetMatrixHosking2eigen}
-n\log |\mathfrak{\Hat{\bm{R}}}_\mathit{m}|
   = -n\sum_{\ell=1}^{m}
 \sum_{i=1}^{k}\log(1-\lambda_{i}(\ell)).
\end{equation}
Expanding
$\log(1-\lambda_{i}(\ell))=-\sum_{r=1}^{\infty}r^{-1}{\lambda_{i}^{r}(\ell)}$
and
$\tr(\bm{A}_{\ell})=\sum_{i=1}^{k}\lambda_{i}(\ell)$,
\begin{equation}
\label{DetMatrixHosking3}
\mathfrak{D}_m
   = n\sum_{\ell=1}^{m}
    \tr(\bm{A}_{\ell})+O_{p}(n^{-1}).
\end{equation}
One can verify that
\begin{equation}
\label{DetMatrixHosking4}
\begin{aligned}
\tr(\bm{A}_{1})
 & =\tr(\hat{\bm{R}}_{1}^{\prime}\hat{\bm{R}}_{1})\\
\tr(\bm{A}_{2}) & \approx \tr(\hat{\bm{R}}_{1}^{\prime}\hat{\bm{R}}_{1})+\tr(\hat{\bm{R}}_{2}^{\prime}\hat{\bm{R}}_{2})\\
 & \vdots\\
\tr(\bm{A}_{m}) & \approx \tr(\hat{\bm{R}}_{1}^{\prime}\hat{\bm{R}}_{1})+\ldots+\tr(\hat{\bm{R}}_{m}^{\prime}\hat{\bm{R}}_{m}),
\end{aligned}
\end{equation}
so that,
\begin{equation}
\label{DetMatrixHosking5}
\mathfrak{D}_m
   \approx n\sum_{\ell=1}^{m}
    (m-\ell+1)\tr(\hat{\bm{R}}_{\ell}^{\prime}\hat{\bm{R}}_{\ell}).
\end{equation}

Using the commutative property of trace,
\begin{equation}
\label{DetMatrixHosking6}
\mathfrak{D}_m
   \approx n\sum_{\ell=1}^{m}
    (m-\ell+1)\tr({\Hat{\bm{\Gamma}}_{\mathit{\ell}}}^{{\prime}}{\Hat{\bm{\Gamma}}_{0}}^{-1}
    \Hat{\bm{\Gamma}}_{\mathit{\ell}}{\Hat{\bm{\Gamma}}_{0}}^{-1}).
\end{equation}
It follows from \citet[eq. (2.12)]{Neudecker1969},
\begin{equation}
\label{DetMatrixHosking7}
\begin{aligned}
\mathfrak{D}_m
   &\approx n\sum_{\ell=1}^{m}(m-\ell+1)
 (\VEC\Hat{\bm{\Gamma}}_{\ell})^{\prime}(\Hat{\bm{\Gamma}}_{0}^{-1}\otimes\Hat{\bm{\Gamma}}_{0}^{-1})\VEC\Hat{\bm{\Gamma}}_{\ell},\\
 &=n(\VEC\Hat{\bm{\Gamma}})^{\prime}(\mathbb{I}_{m}\otimes\Hat{\bm{\Gamma}}_{0}^{-1}\otimes\Hat{\bm{\Gamma}}_{0}^{-1})\bm{M}
 (\VEC\Hat{\bm{\Gamma}}),
\end{aligned}
\end{equation}
where
$\VEC\Hat{\bm{\Gamma}}=(\VEC\Hat{\bm{\Gamma}}_1\ldots \VEC\Hat{\bm{\Gamma}}_m)$ is $k^2m\times 1$ column vector
and
$\bm{M}$ is $k^2m\times k^2m$ diagonal matrix defined in eqn. (\ref{M}).

\citet[Theorem 1]{Hosking1980B} showed that
\begin{equation}
\label{HoskingVec}
\sqrt{n}\VEC\Hat{\bm{\Gamma}}\sim N_{k^2m}(\bm{0},(\mathbb{I}_{k^2m}-\bm{Q})\bm{W}),
\end{equation}
where
$\bm{W}^{-1}$ can be replaced by a consistent estimator
$\hat{\bm{W}}^{-1}=\mathbb{I}_{m}\otimes\Hat{\bm{\Gamma}}_{0}^{-1}\otimes\Hat{\bm{\Gamma}}_{0}^{-1}$,
and $\bm{Q}$ is the idempotent matrix with rank $k^2(p+q)$ in eqn. (\ref{Q}).

From the theorem on quadratic forms given by \citet[Theorem 2.1]{Box1954}, and
eqns. (\ref{DetMatrixHosking7}, \ref{HoskingVec}),
the asymptotic distribution of $\mathfrak{D}_m$ is given by,
\begin{equation}
\label{DetMatrixHosking8}
\mathfrak{D}_m\rightarrow\sum_{i=1}^{k^2m}\lambda_{i}\chi_{1}^2,
\end{equation}
where $\rightarrow$ stands for convergence in distribution as
$n \rightarrow \infty$ and
$\lambda_{1},\ldots,\lambda_{k^2m}$
are the eigenvalues of $(\mathbb{I}_{k^2m}-\bm{Q})\bm{M}$.
\end{proof}

\subsubsection{Approximation}\label{Approximation}

The upper percentiles of the cumulative distribution function
in eqn. (\ref{DetMatrixHosking8}) could be evaluated by the \citet{Imhof1961} algorithm.
For the univariate case, \citet[Table 2]{Lin2006} showed that the convergence
to the asymptotic distribution is very slow.
In the case of large-samples, an approximation based
on \citet[Theorem 3.1]{Box1954} works well.
Using this result, the test statistic in eqn. (\ref{DetMatrixHosking8})
can be approximated by $a\chi_{b}^2$,
where $a$ and $b$ are chosen to make the first two moments
agree with those of exact distribution
of $\mathfrak{D}_m$.
Hence,
$a=\sum\lambda_i^2/\sum\lambda_i$
and
$b=(\sum\lambda_i)^2/\sum\lambda_i^2$,
where,
\begin{equation}
\label{sumL}
\begin{aligned}
\sum_{i=1}^{k^2m}\lambda_{i} &= \tr(\mathbb{I}_{k^2m}-\bm{Q})\bm{M},\\\smallskip
\sum_{i=1}^{k^2m}\lambda_{i}^2 &=\tr(\mathbb{I}_{k^2m}-\bm{Q})\bm{M}(\mathbb{I}_{k^2m}-\bm{Q})\bm{M}.
\end{aligned}
\end{equation}

When $p=q=0$,
$a=(2m+1)/3$ and $b=1.5k^2m(m+1)/(2m+1)$.
In the \VARMA$(p,q)$ case,
one degree of freedom is lost for each parameter so
$\mathfrak{D}_m$ is approximately distributed as $a \chi_{b}^2$,
where
\begin{equation}
\label{chisquparameters}
\begin{aligned}
a&=\frac{2m+1}{3},\\\smallskip
b&=\frac{3k^2m(m+1)}{2(2m+1)}-k^2(p+q).
\end{aligned}
\end{equation}

\subsection{Monte-Carlo significance test}\label{subMonte}

Monte-Carlo significance tests, originally suggested by George Barnard \citep{Barnard},
are feasible for many small-sample problems \citep{Marriott}
and with modern computing facilities these types of tests are increasingly
feasible for larger samples and more complex problems \citep{Dufour01}.
For a pure significance test with no nuisance parameters, as is the case, for
example, for simply testing a time series for randomness,
accuracy of the Monte-Carlo procedure depends only on
the number of simulations \citep[Proposition 2.1]{Dufour2006}.

In the case of diagnostic checking, the model parameters must be estimated and
 \citet[Proposition 5.1]{Dufour2006} has shown that, provided consistent estimators are used,
Monte-Carlo tests remain asymptotically valid.
Since we assume $n^{-1/2}$-consistent estimators are used, the requirements
for  \citet[Proposition 5.1]{Dufour2006} are met.

Simulations for $\mathfrak{D}_m$ in the univariate case \citep[Table 3]{Lin2006}
as well as our simulations for the multivariate case in Section \ref{subSignificance},
suggest the impact of nuisance parameters
is negligible.
The p-value for all of the portmanteau test statistics presented in this
paper may be obtained using the Monte-Carlo method outlined below.
We use the statistic $\mathfrak{D}_m$ in the description but $\Tilde{Q}_m$
could be used instead.

\begin{description}
\item[Step 1:] Set $N$, the number of simulations. Usually, $N \leftarrow 1000$ but
smaller values may be used if necessary. By choosing $N$ large enough, an accurate
estimate of the p-value may be obtained.
\item[Step 2:] After fitting the model and obtaining the residuals, compute the portmanteau
test statistic for lag $m$ or possibly a set of lags such as $\ell=1, \ldots, m$, where $m \ge 1$.
Typically $m$ is chosen large enough to allow for possible high-order autocorrelations.
Denote the observed value of the test statistics by $\mathfrak{D}_\ell^{(o)}, \ell=1, \ldots, m$.
\item[Step 3:] For each $i=1,\ldots,N$, simulate the fitted model, refit it, obtain the
residuals from this model, compute the test statistic, $\mathfrak{D}_\ell^{(i)}, \ell=1, \ldots, m$.
\item[Step 4:] For each $\ell$, $\ell=1, \ldots, m$, the estimated p-value is given by,
\begin{equation}
\label{MCEquation}
       \hat{\mathsf{p}} =\frac{\#\{\mathfrak{D}_\ell^{(i)} \geq\mathfrak{D}_\ell^{(o)},~i=1,2,\ldots,N\}+1}{N+1}.
\end{equation}
\end{description}

The approximate 95\% margin of error for the p-value is,
$1.96 \sqrt{\hat{\mathsf{p}} (1- \hat{\mathsf{p}})/N}$.

The above algorithm is a simply a restatement of the Monte-Carlo testing algorithm
given by \citet[\S 3]{Lin2006} for the univariate case.
\citet[Table 3]{Lin2006} demonstrate that the Monte-Carlo testing
procedure has the correct size for an \AR$(1)$ and this is verified for some \VAR$(1)$
models in Section~\ref{subSignificance}.

\theoremstyle{remark}\newtheorem{rem}{\smcaps{Remark}}

\begin{rem}
In the Monte-Carlo test procedure it is assumed that the innovations
used in our simulations in Step 3 are normally distributed but
any distribution with constant covariance matrix could be used.
In particular, using the empirical joint distribution is equivalent to
bootstrapping the multivariate residuals.
Using bootstrapped residuals is implemented
in our software \citep{Mahdi2011}.
\end{rem}

\begin{rem}
A limitation of the Monte-Carlo diagnostic check
is the assumption of constant variance.
Many financial time  series exhibit conditional
heteroscedasticity.
In practice this means that our test may overstate
the significance level \citep{Duchesne2003}.
This means that when used for constructing a VAR or VARMA model,
the final fitted model may not be as parsimonious as a model
developed using a portmanteau test which takes into
conditional heteroscedasticity \citep{Francq2007, Duchesne2006}.
Our Monte-Carlo portmanteau test can also be used to test for the presence of multivariate conditional
heteroscedasticity simply by replacing the residuals by squared or absolute residuals.
An illustration of this procedure is given later in Section~\ref{subWestGerman}.
\end{rem}

\begin{rem}
\citet{Francq2007} discuss a more general asymptotic
multivariate portmanteau  diagnostic test that is valid
assuming only that the innovations are uncorrelated.
This test requires a large sample though.
\end{rem}

\begin{rem}
\citet{Lin2008} discuss the Monte-Carlo portmanteau test for
univariate ARMA with infinite variance.
The Monte-Carlo method of \citet{Lin2008} for infinite-variance ARMA has been extended
to the multivariate case as well and
is available in our R package \citep{Mahdi2011}.
\end{rem}

\section{Simulation results}\label{secSimulation}

The purpose of our simulations is to demonstrate the improved power
as well as the correct size of the
Monte-Carlo (MC) test using $\mathfrak{D}_m$.
We also compare the empirical Type 1 error rates for the $a \chi^2_b$-approximation
discussed in Section \ref{Approximation}.

\subsection{Comparison of type 1 error rates}\label{subSignificance}

The empirical error rates have been evaluated under the Gaussian bivariate \VAR(1) process
$\bm{Z}_{t}=\bm{\Phi}_i\bm{Z}_{t-1}+\bm{a}_{t},~i=1,\ldots,4$
for the portmanteau test statistic $\mathfrak{D}_m$ using the MC and $a \chi^2_b$-approximation
to evaluate the p-value.
The covariance matrix of $\bm{a}_{t}$ has unit variances and covariance $1/2$ and
the coefficient matrices are taken from \citet{Hosking1980B} and \citet{Li1981},
\renewcommand\minalignsep{10pt}
\begin{eqnarray*}
\bm{\Phi}_1=\left(%
\begin{array}{cc}
 0.9 & 0.1 \\
  -0.6 & 0.4 \\
\end{array}%
\right),~
\bm{\Phi}_2=\left(%
\begin{array}{cc}
  -1.5 & 1.2 \\
  -0.9 & 0.5 \\
\end{array}%
\right),
\bm{\Phi}_3=\left(%
\begin{array}{cc}
  0.4 & 0.1 \\
  -1.0 & 0.5 \\
\end{array}%
\right),~
\bm{\Phi}_4=\left(%
\begin{array}{cc}
  0.3 & 0.5 \\
  0.0 & 0.3 \\
\end{array}%
\right).
\label{PHIGAMMA}
\end{eqnarray*}

The empirical error rates are shown in Table 1.
For each entry in Table 1, $10^3$ simulations were done.
The MC test also used $N=10^3$.

The 95\% confidence interval assuming the a 5\% rejection rate for
each test is $(3.6, 6.4)$.
There are 17 entries outside this interval with the $a \chi^2_b$ approximation
only one 1 with the Monte-Carlo test.
In conclusion, size-distortion with the Monte-Carlo test appears
to be negligible but is sometimes present
when the $a \chi^2_b$ approximation is used.

In Section \ref{secApplication}, we found that there is
a much larger discrepancy between the p-values using
the $a \chi^2_b$ approximation and those using the Monte-Carlo test.

\begin{table}[htpb]
\begin{center}
\begin{tabular}{ccccccccccc}
\noalign{\smallskip}
\noalign{\hrule}
\noalign{\smallskip}
\noalign{\hrule}
\noalign{\smallskip}
&&\multicolumn{2}{c}{$n=100$}&&\multicolumn{2}{c}{$n=200$}&&\multicolumn{2}{c}{$n=500$}\\
\noalign{\smallskip}
\cline{3-4}\cline{6-7}\cline{9-10}
\noalign{\smallskip}
 & $m$ & $a \chi^2_b$ &MC  && $a \chi^2_b$ & MC && $a \chi^2_b$ & MC \\
\noalign{\smallskip}
\noalign{\hrule}
$\bm{\Phi}_1$ &    &   &   &&   &   &&   &    \\
 &   5 &  5.9 &  4.6 &&  5.1 &  4.7 &&  4.8 &  4.8  \\
 &  10 &  5.2 &  4.5 &&  4.4 &  5.2 &&  3.7 &  4.2  \\
 &  15 &  5.7 &  5.4 &&  4.5 &  4.4 &&  3.6 &  3.8  \\
 &  20 &  6.8 &  5.8 &&  4.8 &  4.0 &&  3.8 &  3.8  \\
 &  25 &  7.8 &  4.9 &&  5.3 &  4.1 &&  4.0 &  4.0  \\
 &  30 &  9.0 &  4.8 &&  5.8 &  3.7 &&  4.4 &  4.1  \\

$\bm{\Phi}_2$ &    &   &   &&   &   &&   &    \\
  &  5 &  4.7 &  4.8 &&  4.0 &  4.8 &&  3.5 &  4.7  \\
 &  10 &  4.8 &  3.8 &&  3.8 &  4.0 &&  3.5 &  4.8  \\
 &  15 &  5.7 &  3.9 &&  4.3 &  3.9 &&  3.6 &  5.0  \\
 &  20 &  6.9 &  4.2 &&  4.9 &  4.2 &&  3.8 &  4.8  \\
 &  25 &  8.2 &  4.0 &&  5.3 &  3.9 &&  4.1 &  5.3  \\
 &  30 &  9.5 &  4.3 &&  5.8 &  4.0 &&  4.5 &  5.4  \\

$\bm{\Phi}_3$ &    &   &   &&   &   &&   &    \\
  &  5 &  4.0 &  4.6 &&  3.6 &  5.7 &&  3.2 &  5.2  \\
 &  10 &  4.5 &  4.8 &&  3.8 &  6.5 &&  3.1 &  5.3  \\
 &  15 &  5.1 &  4.2 &&  4.1 &  6.3 &&  3.3 &  5.1  \\
 &  20 &  6.6 &  4.3 &&  4.6 &  6.2 &&  3.6 &  5.2  \\
 &  25 &  7.7 &  4.5 &&  5.3 &  5.4 &&  4.0 &  5.3  \\
 &  30 &  9.0 &  4.2 &&  5.9 &  5.5 &&  4.3 &  5.0  \\

$\bm{\Phi}_4$ &    &   &   &&   &   &&   &    \\
  &  5 &  2.9 &  4.3 &&  2.6 &  4.7 &&  2.5 &  5.2  \\
 &  10 &  3.9 &  4.6 &&  3.2 &  4.9 &&  3.0 &  4.5  \\
 &  15 &  4.9 &  4.1 &&  3.9 &  4.6 &&  3.2 &  5.0  \\
 &  20 &  6.1 &  4.4 &&  4.5 &  5.3 &&  3.6 &  4.9  \\
 &  25 &  7.3 &  3.9 &&  5.0 &  5.0 &&  3.9 &  4.8  \\
 &  30 &  8.7 &  3.9 &&  5.6 &  5.2 &&  4.3 &  4.7 \\
\noalign{\smallskip}
\noalign{\hrule}
\end{tabular}
\label{TableSigLevel}
\caption{The empirical $5\%$ significance level, in percent, comparing
approximation, $a \chi^2_b$, and Monte-Carlo, MC,
for the portmanteau test statistic $\mathfrak{D}_m$.
}
\end{center}
\end{table}

\goodbreak
\newpage

\subsection{Power comparisons}\label{subPower}

Only Monte-Carlo significance tests are used to
compare the empirical power of 5\% level
tests with $\Tilde{Q}_m$ and $\mathfrak{D}_m$.
Possible size-distortion sometimes makes power comparisons between asymptotic tests and
Monte-Carlo tests invalid.
In our comparisons, VAR models are fitted to various multivariate
models.
The power of diagnostic tests using $\mathfrak{D}_m$ versus $\Tilde{Q}_m$
are compared using simulation.
In all comparisons, the p-values were evaluated using
the Monte-Carlo (MC) method with $N = 10^3$.
We consider a \VAR(1) model fitted
to simulated data generated from eight \VARMA models selected
from well-known textbooks as cited below.

\subsubsection*{Model 1}

\citet[p. 17]{Lutkepohl2005}.

\begin{eqnarray*}\label{PowerEquation1}
    \left[%
    \begin{array}{c}
      Z_{1,t} \\
      Z_{2,t}
    \end{array}%
    \right]
    -\left[%
    \begin{array}{cc}
      0.5 & 0.1 \\
      0.4 & 0.5 \\
    \end{array}%
    \right]
    \left[%
    \begin{array}{c}
      Z_{1,t-1} \\
      Z_{2,t-1}
    \end{array}%
    \right]
    -\left[%
    \begin{array}{cc}
      0 & 0 \\
      0.3 & 0 \\
    \end{array}%
    \right]
    \left[%
    \begin{array}{c}
      Z_{1,t-2} \\
      Z_{2,t-2}
    \end{array}%
    \right]
    =\left[%
    \begin{array}{c}
      a_{1,t} \\
      a_{2,t}
    \end{array}%
    \right]
    \end{eqnarray*}
   \begin{equation*}
    \bm{\Gamma}_{0}=\left(%
    \begin{array}{cc}
     1.00 & 0.71 \\
     0.71 & 1.00 \\
    \end{array}%
    \right)
   \end{equation*}

\subsubsection*{Model 2}

\citet[p. 428]{Brockwell}.

    \begin{eqnarray*}\label{PowerEquation5}
      \left[%
      \begin{array}{c}
       Z_{1,t} \\
       Z_{2,t}
      \end{array}%
     \right]
     -\left[%
      \begin{array}{cc}
        0.7 & 0 \\
        0 & 0.6 \\
      \end{array}%
      \right]
      \left[%
      \begin{array}{c}
       Z_{1,t-1} \\
       Z_{2,t-1}
      \end{array}%
      \right]
      =\left[%
      \begin{array}{c}
        a_{1,t} \\
        a_{2,t}
      \end{array}%
      \right]-\left[%
      \begin{array}{cc}
       0.5 & 0.6 \\
      -0.7 & 0.8 \\
     \end{array}%
     \right]
     \left[%
     \begin{array}{c}
      a_{1,t-1} \\
      a_{2,t-1}
     \end{array}%
     \right]
     \end{eqnarray*}
\begin{equation*}
  \bm{\Gamma}_{0}=\left(%
  \begin{array}{cc}
  1.00 & 0.71 \\
  0.71 & 2.00 \\
  \end{array}%
  \right)
\end{equation*}

\subsubsection*{Model 3}

\citet[p. 81]{Reinsel1997}.

    \begin{eqnarray*}\label{PowerEquation6}
      \left[%
      \begin{array}{c}
       Z_{1,t} \\
       Z_{2,t}
      \end{array}%
     \right]
     -\left[%
      \begin{array}{cc}
        1.2 & -0.5 \\
        0.6 & 0.3 \\
      \end{array}%
      \right]
      \left[%
      \begin{array}{c}
       Z_{1,t-1} \\
       Z_{2,t-1}
      \end{array}%
      \right]
      =\left[%
      \begin{array}{c}
        a_{1,t} \\
        a_{2,t}
      \end{array}%
      \right]-\left[%
      \begin{array}{cc}
       -0.6 & 0.3 \\
       0.3 & 0.6 \\
     \end{array}%
     \right]
     \left[%
     \begin{array}{c}
      a_{1,t-1} \\
      a_{2,t-1}
     \end{array}%
     \right]
     \end{eqnarray*}
     \begin{equation*}
  \bm{\Gamma}_{0}=\left(%
  \begin{array}{cc}
  1.00 & 0.50 \\
  0.50 & 1.25 \\
  \end{array}%
  \right)
\end{equation*}

\subsubsection*{Model 4}

Tsay [2005 2nd ed, p. 371].

    \begin{eqnarray*}\label{PowerEquation7}
      \left[%
      \begin{array}{c}
       Z_{1,t} \\
       Z_{2,t}
      \end{array}%
     \right]
     -\left[%
      \begin{array}{cc}
        0.8 & -2 \\
        0 & 0 \\
      \end{array}%
      \right]
      \left[%
      \begin{array}{c}
       Z_{1,t-1} \\
       Z_{2,t-1}
      \end{array}%
      \right]
      =\left[%
      \begin{array}{c}
        a_{1,t} \\
        a_{2,t}
      \end{array}%
      \right]
      -\left[%
      \begin{array}{cc}
       -0.5 & 0 \\
      0 & 0 \\
     \end{array}%
     \right]
     \left[%
     \begin{array}{c}
      a_{1,t-1} \\
      a_{2,t-1}
     \end{array}%
     \right]
     \end{eqnarray*}
     \begin{equation*}
  \bm{\Gamma}_{0}=\left(%
  \begin{array}{cc}
  1.00 & 0.71 \\
  0.71 & 1.00 \\
  \end{array}%
  \right)
\end{equation*}

\subsubsection*{Model 5}

\citet[p. 25]{Reinsel1997}.

    \begin{eqnarray*}\label{PowerEquation9}
      \left[%
      \begin{array}{c}
       Z_{1,t} \\
       Z_{2,t}
      \end{array}%
     \right]
      =\left[%
      \begin{array}{c}
        a_{1,t} \\
        a_{2,t}
      \end{array}%
      \right]-\left[%
      \begin{array}{cc}
       0.8 & 0.7 \\
      -0.4 & 0.6 \\
     \end{array}%
     \right]
     \left[%
     \begin{array}{c}
      a_{1,t-1} \\
      a_{2,t-1}
     \end{array}%
     \right]
     \end{eqnarray*}
     \begin{equation*}
  \bm{\Gamma}_{0}=\left(%
  \begin{array}{cc}
  4 & 1 \\
  1 & 2 \\
  \end{array}%
  \right)
\end{equation*}

\subsubsection*{Model 6}

Tsay [2005 2nd ed, p. 350].

    \begin{eqnarray*}\label{PowerEquation10}
      \left[%
      \begin{array}{c}
       Z_{1,t} \\
       Z_{2,t}
      \end{array}%
     \right]
      =\left[%
      \begin{array}{c}
        a_{1,t} \\
        a_{2,t}
      \end{array}%
      \right]-\left[%
      \begin{array}{cc}
       0.2 & 0.3 \\
      -0.6 & 1.1 \\
     \end{array}%
     \right]
     \left[%
     \begin{array}{c}
      a_{1,t-1} \\
      a_{2,t-1}
     \end{array}%
     \right]
     \end{eqnarray*}
     \begin{equation*}
  \bm{\Gamma}_{0}=\left(%
  \begin{array}{cc}
  2 & 1 \\
  1 & 1 \\
  \end{array}%
  \right)
  \end{equation*}

\subsubsection*{Model 7}

\citet[p. 445]{Lutkepohl2005}.

   \begin{eqnarray*}\label{PowerEquation11}
    \left[%
    \begin{array}{c}
      Z_{1,t} \\
      Z_{2,t}
    \end{array}%
    \right]
    -\left[%
    \begin{array}{cc}
      0.5 & 0.1 \\
      0.4 & 0.5 \\
    \end{array}%
    \right]
    \left[%
    \begin{array}{c}
      Z_{1,t-1} \\
      Z_{2,t-1}
    \end{array}%
    \right]
    &-&\left[%
    \begin{array}{cc}
      0 & 0 \\
      0.25 & 0 \\
    \end{array}%
    \right]
    \left[%
    \begin{array}{c}
      Z_{1,t-2} \\
      Z_{2,t-2}
    \end{array}%
    \right]
    =\\
    &&\left[%
    \begin{array}{c}
      a_{1,t} \\
      a_{2,t}
    \end{array}%
    \right]
    -\left[%
      \begin{array}{cc}
       0.6 & 0.2 \\
       0 & 0.3 \\
     \end{array}%
     \right]
     \left[%
     \begin{array}{c}
      a_{1,t-1} \\
      a_{2,t-1}
     \end{array}%
     \right]
     \end{eqnarray*}
   \begin{equation*}
    \bm{\Gamma}_{0}=\left(%
    \begin{array}{cc}
     1.0 & 0.3 \\
     0.3 & 1.0 \\
    \end{array}%
    \right)
   \end{equation*}

\subsubsection*{Model 8}

\citet[p. 141]{Reinsel1992}.

   \begin{eqnarray*}\label{PowerEquation12}
    \left[%
    \begin{array}{c}
      Z_{1,t} \\
      Z_{2,t} \\
      Z_{3,t}
    \end{array}%
    \right]
    &-&\left[%
    \begin{array}{ccc}
      0.4 & 0.3 & -0.6 \\
      0.0 & 0.8 & 0.4 \\
      0.3 & 0.0 & 0.0 \\
    \end{array}%
    \right]
    \left[%
    \begin{array}{c}
      Z_{1,t-1} \\
      Z_{2,t-1} \\
      Z_{3,t-1} \\
    \end{array}%
    \right]
    =\\
    &&\left[%
    \begin{array}{c}
      a_{1,t} \\
      a_{2,t} \\
      a_{3,t}
    \end{array}%
    \right]
    -\left[%
      \begin{array}{ccc}
    0.7 & 0.0 & 0.0 \\
    0.1 & 0.2 & 0.0 \\
    -0.4 & 0.5 & -0.1 \\
     \end{array}%
     \right]
     \left[%
     \begin{array}{c}
      a_{1,t-1} \\
      a_{2,t-1} \\
      a_{3,t-1}
     \end{array}%
     \right]
     \end{eqnarray*}
   \begin{equation*}
    \bm{\Gamma}_{0}=\left(%
    \begin{array}{ccc}
     1.0 & 0.5 & 0.4\\
     0.5 & 1.0 & 0.7\\
     0.4 & 0.7 & 1.0\\
    \end{array}%
    \right)
   \end{equation*}

The power of the portmanteau statistics
$\mathfrak{D}_m$ and $\Tilde{Q}_m$ for nominal $5\%$ tests using the MC test
are shown in Table 2.
The power is evaluated for $10^4$ simulations for each parameter setting and
and $N = 10^3$ is used in the MC algorithm.
It is clear from Table 2 that the $\mathfrak{D}_m$ test
is often substantially more powerful than the $\Tilde{Q}_m$.
Only when $n=50$ and $m=30$ is the $\Tilde{Q}_m$ test more powerful
and this only occurs for Models 2 and 4.

\begin{table}[htpb]
\begin{center}
\begin{tabular}{ccccccccccc}
\noalign{\smallskip}
\noalign{\hrule}
\noalign{\smallskip}
\noalign{\hrule}
\noalign{\smallskip}
&&\multicolumn{2}{c}{$n=50$}&&\multicolumn{2}{c}{$n=100$}&&\multicolumn{2}{c}{$n=200$}\\
\noalign{\smallskip}
\cline{3-4}\cline{6-7}\cline{9-10}
\noalign{\smallskip}
Model & $m$ & $\mathfrak{D}_m$ &$\Tilde{Q}_m$  && $\mathfrak{D}_m$ & $\Tilde{Q}_m$ && $\mathfrak{D}_m$ & $\Tilde{Q}_m$ \\
\noalign{\smallskip}
\noalign{\hrule}
1 &   5 &  35 &  24 &&  68 &  53 &&  96 &  90  \\
1 &  10 &  24 &  16 &&  55 &  36 &&  90 &  73  \\
1 &  15 &  18 &  14 &&  46 &  30 &&  85 &  61  \\
1 &  20 &  13 &  13 &&  39 &  26 &&  80 &  52  \\
1 &  30 &  10 &  12 &&  30 &  23 &&  68 &  43  \\
2 &   5 &  70 &  48 && 100 &  94 && 100 & 100  \\
2 &  10 &  60 &  38 &&  99 &  82 && 100 & 100  \\
2 &  15 &  50 &  35 &&  99 &  75 && 100 & 100  \\
2 &  20 &  43 &  34 &&  97 &  70 && 100 &  99  \\
2 &  30 &  28 &  37 &&  93 &  64 && 100 &  97  \\
3 &   5 &  99 &  84 && 100 & 100 && 100 & 100  \\
3 &  10 &  96 &  64 && 100 &  99 && 100 & 100  \\
3 &  15 &  93 &  48 && 100 &  97 && 100 & 100  \\
3 &  20 &  88 &  39 && 100 &  91 && 100 & 100  \\
3 &  30 &  73 &  36 && 100 &  77 && 100 & 100  \\
4 &   5 &  51 &  27 &&  93 &  62 && 100 &  98  \\
4 &  10 &  37 &  24 &&  84 &  48 && 100 &  89  \\
4 &  15 &  27 &  22 &&  74 &  40 &&  99 &  81  \\
4 &  20 &  20 &  22 &&  65 &  37 &&  98 &  73  \\
4 &  30 &  13 &  22 &&  53 &  33 &&  95 &  65  \\
5 &   5 &  99 &  68 && 100 & 100 && 100 & 100  \\
5 &  10 &  95 &  46 && 100 &  93 && 100 & 100  \\
5 &  15 &  90 &  36 && 100 &  81 && 100 & 100  \\
5 &  20 &  83 &  32 && 100 &  72 && 100 & 100  \\
5 &  30 &  69 &  30 && 100 &  60 && 100 &  97  \\
6 &   5 &  83 &  45 && 100 &  90 && 100 & 100  \\
6 &  10 &  74 &  32 && 100 &  69 && 100 & 100  \\
6 &  15 &  62 &  28 &&  99 &  57 && 100 &  96  \\
6 &  20 &  54 &  28 &&  98 &  52 && 100 &  92  \\
6 &  30 &  40 &  27 &&  95 &  44 && 100 &  84  \\
7 &   5 &  29 &  21 &&  65 &  49 &&  97 &  91  \\
7 &  10 &  19 &  14 &&  53 &  33 &&  92 &  74  \\
7 &  15 &  14 &  12 &&  43 &  27 &&  86 &  61  \\
7 &  20 &  13 &  11 &&  35 &  22 &&  82 &  53  \\
7 &  30 &  11 &  11 &&  27 &  19 &&  72 &  41  \\
8 &   5 &  77 &  28 &&  96 &  85 && 100 & 100  \\
8 &  10 &  65 &  19 &&  92 &  61 && 100 &  99  \\
8 &  15 &  52 &  17 &&  84 &  48 && 100 &  94  \\
8 &  20 &  38 &  14 &&  76 &  40 && 100 &  90  \\
8 &  30 &  15 &  13 &&  55 &  33 && 100 &  78  \\
\noalign{\smallskip}
\noalign{\hrule}
\end{tabular}
\label{TablePower}
\caption{Empirical power comparison of $\mathfrak{D}_m$ and $\Tilde{Q}_m$
for a nominal $5\%$ test. Power is in percent.
$10^4$ simulations with $N=10^3$.}
\end{center}
\end{table}

\break\vfill\eject

\section{{Illustrative applications}}\label{secApplication}


\subsection{IBM and S\&P index}\label{subIBM}

\citet[Chapter 8]{Tsay2010} uses the portmanteau diagnostic test in
constructing a VAR model for the monthly log returns of
IBM stock and the S$\&$P 500 index for January $1926$ to December $2008$.
So here, $n=996$.
Univariate analysis for both of these series indicates the
presence of conditional heteroscedasticity \citep[p. 408]{Tsay2010}
but for forecasting purposes, we may consider a VAR model rather a more complex
VAR/GARCH model \citep{Weiss1984, Francq2007}.
There are $n=996$ and the AIC selects a VAR(5) model.
We found that the BIC selects a VAR(1) model.
Table 3 compares the p-values for the portmanteau
tests for the VAR($p$) for $p=1,3,5$.

These portmanteau tests suggest that the VAR(5)
is adequate and that the VAR(1) and VAR(3) both exhibit lack of
fit.
The VAR(4) is not shown but the results for this model are
similar to the VAR(3).
As noted in Remark 2,
the presence of conditional heteroscedasticity means that the
p-values in Table 3 are too small
and this implies that, possibly, a lower order model than
the VAR($5$) may be adequate.
This possibility could be investigated using the multivariate portmanteau
test of \citet{Francq2007}.

Table 3 also shows that $a \chi^2_b$ approximation for
the p-value of $\mathfrak{D}_m$ is inaccurate
whereas for $\Tilde{Q}_m$ the asymptotic approximation
agrees quite well with the Monte-Carlo result.

\begin{table}[ht]
\label{TableTsay}
\begin{center}
\begin{tabular}{lcccccccccccc}
  \hline
\noalign{\smallskip}
&  \multicolumn{4}{c}{VAR(1)}  & \multicolumn{4}{c}{VAR(3)}  & \multicolumn{4}{c}{VAR(5)} \\
\noalign{\smallskip}
&  \multicolumn{2}{c}{$a \chi^2_b$}  & \multicolumn{2}{c}{MC}
&  \multicolumn{2}{c}{$a \chi^2_b$}  & \multicolumn{2}{c}{MC}
&  \multicolumn{2}{c}{$a \chi^2_b$}  & \multicolumn{2}{c}{MC}   \\
\noalign{\smallskip}
m &$\mathfrak{D}_m$ & $\Tilde{Q}_m$ &$\mathfrak{D}_m$ & $\Tilde{Q}_m$ &$\mathfrak{D}_m$ & $\Tilde{Q}_m$ &$\mathfrak{D}_m$ & $\Tilde{Q}_m$ &$\mathfrak{D}_m$ & $\Tilde{Q}_m$ &$\mathfrak{D}_m$ & $\Tilde{Q}_m$ \\
  \hline
 5  & 0.2 & *   & * & *   & 10.4 & 0.6 &  1.7  & 0.6   & NA   &  NA   & 91.2  & 89.9 \\
 10 & 0.1 & 0.3 & * & 0.2 & 13.5 & 6.1 &  2.8  & 4.0   & 77.4 &  50.3 & 59.4  & 50.2 \\
 15 & 0.3 & 2.1 & * & 2.2 & 20.4 & 22.3&  6.4  & 22.1  & 84.0 &  61.2 & 63.1  & 61.4 \\
 20 & 0.2 & *   & * & *   & 15.4 & 2.6 &  5.0  & 2.2   & 71.8 &  11.3 & 45.2  & 9.9 \\
 25 & 0.1 & *   & * & *   & 8.7  & 1.1 &  2.3  & 0.7   & 53.0 &  7.6  & 27.5  & 7.1 \\
 30 & 0.2 & *   & * & *   & 7.3  & 2.7 &  2.3  & 2.2   & 46.2 &  13.7 & 23.3  & 12.0\\
   \hline
\end{tabular}
\caption{
IBM and S$\&$P 500 Index Data.
$a \chi^2_b$: approximation.
MC: Monte-Carlo $N=10^{3}$.
NA: not applicable.
The p-values are in percent.
The $*$ indicates a p-value less than 0.1$\%$.
}
\end{center}
\end{table}

\break\vfill\eject
\strut

\subsection{Investment, income and consumption time series}\label{subWestGerman}

The trivariate quarterly time series, 1960--1982, of West German investment, income, and consumption
was discussed by \citet[\S 3.2.3]{Lutkepohl2005}.
For this series, $n=92$ and $k=3$.
As in \citet[\S 4.3.1]{Lutkepohl2005} we model the logarithms of the first differences.
Using the AIC, \citet[Table 4.5]{Lutkepohl2005} selected a \VAR$(2)$
for this data.
Only lags $m=5,10,15$ are used in the diagnostic checks since $n$ is relatively short.
All diagnostic tests reject simple randomness, \VAR$(0)$.
The Monte-Carlo tests for \VAR$(1)$ suggests model inadequacy at lag 5.
Table 4 
supports the choice of the \VAR$(2)$ model.

\begin{table}[ht]
\begin{center}
\begin{tabular}{lccccccccccccc}
  \hline
\noalign{\smallskip}
&  \multicolumn{4}{c}{VAR(0)}  &  \multicolumn{4}{c}{VAR(1)}  & \multicolumn{4}{c}{VAR(2)} \\
\noalign{\smallskip}
&  \multicolumn{2}{c}{$a \chi^2_b$}  & \multicolumn{2}{c}{MC}
&  \multicolumn{2}{c}{$a \chi^2_b$}  & \multicolumn{2}{c}{MC}
&  \multicolumn{2}{c}{$a \chi^2_b$}  & \multicolumn{2}{c}{MC}   \\
\noalign{\smallskip}
m
&$\mathfrak{D}_m$ & $\Tilde{Q}_m$ &$\mathfrak{D}_m$ & $\Tilde{Q}_m$
&$\mathfrak{D}_m$ & $\Tilde{Q}_m$ &$\mathfrak{D}_m$ & $\Tilde{Q}_m$
&$\mathfrak{D}_m$ & $\Tilde{Q}_m$ &$\mathfrak{D}_m$ & $\Tilde{Q}_m$ \\
  \hline
 5 &  *  &  *  & 0.1 & 0.1 & 3.1 & 4.7  & 2.2  & 4.8  & 33.1 & 29.8 & 31.2 & 38.0 \\
10 &  *  & 0.6 & 0.3 & 0.5 & 4.0 & 14.7 & 7.0  & 12.7 & 49.5 & 48.0 & 54.2 & 50.6 \\
15 &  *  & 0.2 & 0.4 & 0.6 & 4.1 & 13.7 & 17.7 & 12.4 & 32.8 & 34.6 & 56.2 & 35.5 \\
   \hline
\end{tabular}
\label{TableWestGerman}
\caption{
Trivariate West German Macroeconomic Series.
$a \chi^2_b$: approximation.
MC: Monte-Carlo using $10^{3}$ replications.
The p-values are in percent and
$*$ indicates a p-value less than 0.1$\%$.
}
\end{center}
\end{table}

As pointed out in Remark 2, we may test for multivariate heteroscedasticity
by using the squared residuals and Table 5 gives the p-values with
this test for the VAR(2) model.
In this case, $a \chi^2_b$ approximation for $\mathfrak{D}_m$
as well as the asymptotic $\chi^2$ approximation for $\Tilde{Q}_m$
are quite inaccurate.
Based on the Monte-Carlo tests there is little evidence
to reject that null hypothesis of constant variance.

\begin{table}[ht]
\begin{center}
\begin{tabular}{lccccc}
  \hline
\noalign{\smallskip}
&  \multicolumn{5}{c}{VAR(2)} \\
\noalign{\smallskip}
&  \multicolumn{2}{c}{$a \chi^2_b$}  && \multicolumn{2}{c}{MC} \\
\noalign{\smallskip}
m
&$\mathfrak{D}_m$ & $\Tilde{Q}_m$ && $\mathfrak{D}_m$ & $\Tilde{Q}_m$\\
  \hline
 5 & 0.2  &  15.2 & & 31.9 & 81.3 \\
10 & 0.3  &  6.3  & & 24.4 & 37.9 \\
15 & *    &    *  & & 12.2 & 1.6 \\
   \hline
\end{tabular}
\label{TableHeteroscedastic}
\caption{The residuals of the fitted VAR(2) model on West German Macroeconomic series
are tested for heteroscedastic effects.
$a \chi^2_b$: approximation.
MC: Monte-Carlo using $10^{3}$ replications.
The p-values are in percent and
$*$ indicates a p-value less than $0.1\%$.
}
\end{center}
\end{table}

\newpage

\section{Concluding Remarks}\label{secConclusion}

\citet{Box2008} stress the importance of constructing
an adequate and parsimonious model in which the residuals pass a suitable portmanteau
diagnostic check.
In forecasting experiments with monthly riverflow time series,
\citet{Noakes1985} found that simply using a criterion
such as the AIC or BIC may provide a model that either
does not pass a suitable diagnostic check for randomness
of the residuals or that may have more parameters than necessary.
Monthly riverflow time series models chosen with the fewest number of parameters that
pass the portmanteau diagnostic check for periodic autocorrelation \citep{McLeod1994}
tend to produce better one-step ahead forecasts \citep{Noakes1985}.
\citet{McLeod1993} suggested formulating the principle of
parsimony as an optimization problem:  minimize
model complexity subject to model adequacy.
In any case, in the overall approach suggested many years ago
and presented in their recent book \citep{Box2008},
portmanteau diagnostic checks play a crucial role in constructing
time series models.

In Section~\ref{subMonte}, Remark 2, it was pointed out the Monte-Carlo test with $\mathfrak{D}_m$
may also be useful in diagnostic checking for multivariate conditional
heteroscedasticity when used with squared or absolute residuals.
This test is implemented in \citet{Mahdi2011}.
There is an extensive literature on
testing residuals in VAR and VARMA models for conditional heteroscedasticity
\citep{Ling1997, Duchesne2003, Duchesne2004, Rodriguez2005, Duchesne2006, Duchesne2008}.
The power study presented Section~\ref{subPower} suggests that
the $\mathfrak{D}_m$ with squared or absolute residuals may be useful.
\cite{PR2002}
also suggested that using squared-residuals with their generalized-variance
portmanteau test would outperform the usual diagnostic check \citep{McLeod1983}.
Other tests designed for particular alternatives might be
expected to perform better than an omnibus portmanteau test such as
$\mathfrak{D}_m$ or $\Tilde{Q}_m$ when these alternatives hold.
For example, \citet{Rodriguez2005} developed a diagnostic check for heteroscedasticity for the case of small
autocorrelations.

The multivariate portmanteau diagnostic test developed by \citet{Francq2007}
does not require independent and identically innovations but only
uncorrelated innovations.
This test would be appropriate for
the bivariate example in Section~\ref{subIBM}.

Scripts
for reproducing all tables in this paper are
available with our freely available software
\citep{Mahdi2011}.
This package can utilize multicore CPUs often found in modern personal computers
as well as a computer cluster or grid \citep{Schmidberger2009}.
On a modern eight core personal computer, the computations for Tables 4 and 5 take
about one minute.
Table 3 takes about six minutes due to the longer series
length and increased number of lags.
The simulations reported in Section \ref{secSimulation} were run on
a computer cluster.

\section*{Acknowledgements}
The authors would like to thank the referees for helpful comments that
greatly improved our paper.
This research was supported by an NSERC Discovery Grant awarded
to A.I. McLeod.

\bibliographystyle{plainnat}
\bibliography{JTSA3192R2}

\begin{thebibliography}{42}
\providecommand{\natexlab}[1]{#1}
\providecommand{\url}[1]{\texttt{#1}}
\expandafter\ifx\csname urlstyle\endcsname\relax
  \providecommand{\doi}[1]{doi: #1}\else
  \providecommand{\doi}{doi: \begingroup \urlstyle{rm}\Url}\fi

\bibitem[Barnard(1963)]{Barnard}
G.~A. Barnard.
\newblock Discussion of ``{T}he spectral analysis of point processes'' by {M}.
  {S}. {B}artlett.
\newblock \emph{Journal of the Royal Statistical Society, B}, 25:\penalty0
  264--296, 1963.

\bibitem[Box(1954)]{Box1954}
G.~E.~P. Box.
\newblock Some theorems on quadratic forms applied in the study of analysis of
  variance problems, {I}. {E}ffect of inequality of variance in the one-way
  classification.
\newblock \emph{The Annals of Mathematical Statistics}, 25\penalty0
  (2):\penalty0 290--302, 1954.

\bibitem[Box and Pierce(1970)]{Box1970}
G.~E.~P. Box and D.~A. Pierce.
\newblock Distribution of residual autocorrelation in autoregressive-integrated
  moving average time series models.
\newblock \emph{Journal of American Statistical Association}, 65\penalty0
  (332):\penalty0 1509--1526, 1970.

\bibitem[Box et~al.(2008)Box, Jenkins, and Reinsel]{Box2008}
G.~E.~P. Box, G.~M. Jenkins, and G.~C. Reinsel.
\newblock \emph{Time Series Analysis: Forecasting and Control}.
\newblock Wiley, New York, 4th edition, 2008.

\bibitem[Brockwell and Davis(1991)]{Brockwell}
P.~J. Brockwell and R.~A. Davis.
\newblock \emph{Time Series: Theory and Methods}.
\newblock Springer-Verlag, New York, 2nd edition, 1991.

\bibitem[{Chabot-Hallé} and Duchesne(2008)]{Duchesne2008}
D.~{Chabot-Hallé} and P.~Duchesne.
\newblock Diagnostic checking of multivariate nonlinear time series models with
  martingale difference errors.
\newblock \emph{Statistics and Probability Letters}, 78:\penalty0 997--1005,
  2008.

\bibitem[Chitturi(1974)]{Chitturi1974}
R.~V. Chitturi.
\newblock Distribution of residual autocorrelations in multiple autoregressive
  schemes.
\newblock \emph{Journal of the American Statistical Association}, 69\penalty0
  (348):\penalty0 928--934, 1974.

\bibitem[Duchesne(2004)]{Duchesne2004}
P.~Duchesne.
\newblock On robust testing for conditional heteroscedasticity in time series
  models.
\newblock \emph{Computational Statistics \& Data Analysis}, 46:\penalty0
  227--256, 2004.

\bibitem[Duchesne(2006)]{Duchesne2006}
P.~Duchesne.
\newblock Testing for multivariate autoregressive conditional
  heteroskedasticity using wavelets.
\newblock \emph{Computational Statistics \& Data Analysis}, 51\penalty0
  (353):\penalty0 2142--2163, 2006.

\bibitem[Duchesne and Lalancette(2003)]{Duchesne2003}
P.~Duchesne and S.~Lalancette.
\newblock On testing for multivariate {ARCH} effects in vector time series
  models.
\newblock \emph{The Canadian Journal of Statistics}, 31\penalty0 (3):\penalty0
  pp. 275--292, 2003.

\bibitem[Dufour(2006)]{Dufour2006}
{J.-M.} Dufour.
\newblock Monte {C}arlo tests with nuisance parameters: A general approach to
  finite-sample inference and non-standard asymptotics.
\newblock \emph{Journal of Econometrics}, 133\penalty0 (2):\penalty0 443--477,
  2006.

\bibitem[Dufour and Khalaf(2001)]{Dufour01}
{J.-M.} Dufour and L.~Khalaf.
\newblock Monte-{C}arlo test methods in econometrics.
\newblock In B.~Baltagi, editor, \emph{Companion to Theoretical Econometrics},
  chapter~23, pages 494--519. Blackwell, Oxford, 2001.

\bibitem[Francq and Ra\"isi(2007)]{Francq2007}
C.~Francq and H.~Ra\"isi.
\newblock Multivariate portmanteau test for autoregressive models with
  uncorrelated but nonindependent errors.
\newblock \emph{Journal of Time Series Analysis}, 28\penalty0 (3):\penalty0
  454--470, 2007.

\bibitem[Hosking(1980)]{Hosking1980B}
J.~R.~M. Hosking.
\newblock The multivariate portmanteau statistic.
\newblock \emph{Journal of American Statistical Association}, 75\penalty0
  (371):\penalty0 602--607, 1980.

\bibitem[Hosking(1981{\natexlab{a}})]{Hosking1981A}
J.~R.~M. Hosking.
\newblock Lagrange-multiplier tests of multivariate time-series models.
\newblock \emph{Journal of the Royal Statistical Society. Series B
  (Methodological)}, 43\penalty0 (2):\penalty0 219--230, 1981{\natexlab{a}}.

\bibitem[Hosking(1981{\natexlab{b}})]{Hosking1981B}
J.~R.~M. Hosking.
\newblock Equivalent forms of the multivariate portmanteau statistic.
\newblock \emph{Journal of the Royal Statistical Society, Series B},
  43\penalty0 (2):\penalty0 261--262, 1981{\natexlab{b}}.

\bibitem[Imhof(1961)]{Imhof1961}
J.~P. Imhof.
\newblock Computing the distribution of quadratic forms in normal variables.
\newblock \emph{Biometrika}, 48:\penalty0 419--426, 1961.

\bibitem[Li(2004)]{Li2004}
W.~K. Li.
\newblock \emph{Diagnostic Checks in Time Series}.
\newblock Chapman and Hall/CRC, New York, 2004.

\bibitem[Li and Mc{L}eod(1981)]{Li1981}
W.~K. Li and A.~I. Mc{L}eod.
\newblock Distribution of the residual autocorrelation in multivariate {ARMA}
  time series models.
\newblock \emph{Journal of the Royal Statistical Society, Series B},
  43\penalty0 (2):\penalty0 231--239, 1981.

\bibitem[Lin and Mc{L}eod(2006)]{Lin2006}
{J.-W.} Lin and A.~I. Mc{L}eod.
\newblock Improved {P}e$\tilde{\mbox{n}}$a-{R}odr\'{\i}guez portmanteau test.
\newblock \emph{Computational Statistics and Data Analysis}, 51\penalty0
  (3):\penalty0 1731--1738, 2006.

\bibitem[Lin and Mc{L}eod(2008)]{Lin2008}
{J.-W.} Lin and A.~I. Mc{L}eod.
\newblock Portmanteau tests for {ARMA} models with infinite variance.
\newblock \emph{Journal of Time Series Analysis}, 29\penalty0 (3):\penalty0
  600--617, 2008.

\bibitem[Ling and Li(1997)]{Ling1997}
S.~Ling and W.~K. Li.
\newblock Diagnostic checking of nonlinear multivariate time series with
  multivariate {ARCH} errors.
\newblock \emph{Journal of Time Series Analysis}, 18\penalty0 (5):\penalty0
  447--464, 1997.

\bibitem[Ljung and Box(1978)]{Ljung1978}
G.~M. Ljung and G.~E.~P. Box.
\newblock On a measure of lack of fit in time series models.
\newblock \emph{Biometrika}, 65:\penalty0 297--303, 1978.

\bibitem[L{\"u}tkepohl(2005)]{Lutkepohl2005}
H.~L{\"u}tkepohl.
\newblock \emph{New Introduction to Multiple Time Series Analysis}.
\newblock Springer-Verlag, New York, 2005.

\bibitem[Mahdi and Mc{L}eod(2011)]{Mahdi2011}
E.~Mahdi and A.~I. Mc{L}eod.
\newblock \emph{portes: Portmanteau Tests for ARMA, VARMA, ARCH, and FGN
  Models}, 2011.
\newblock URL \url{http://CRAN.R-project.org/package=portes}.

\bibitem[Marriott(1979)]{Marriott}
F.~H.~C. Marriott.
\newblock Barnard's {M}onte {C}arlo tests: How many simulations?
\newblock \emph{Applied Statistics}, 28\penalty0 (1):\penalty0 75--77, 1979.

\bibitem[Mc{L}eod(1977)]{McLeod1977}
A.~I. Mc{L}eod.
\newblock Improved {B}ox-{J}enkins estimators.
\newblock \emph{Biometrika}, 64\penalty0 (3):\penalty0 531--534, 1977.

\bibitem[McLeod(1993)]{McLeod1993}
A.~I. McLeod.
\newblock Parsimony, model adequacy and periodic correlation in forecasting
  time series.
\newblock \emph{International Statistical Review}, 61\penalty0 (3):\penalty0
  387--393, 1993.

\bibitem[Mc{L}eod(1994)]{McLeod1994}
A.~I. Mc{L}eod.
\newblock Diagnostic checking periodic autoregression models with application.
\newblock \emph{The Journal of Time Series Analysis}, 15:\penalty0 221--233,
  1994.
\newblock Addendum, JTSA 16, 647--648.

\bibitem[Mc{L}eod and Li(1983)]{McLeod1983}
A.~I. Mc{L}eod and W.~K. Li.
\newblock Diagnostic checking {ARMA} time series models using squared-residual
  autocorrelations.
\newblock \emph{Journal of Time Series Analysis}, 4:\penalty0 269--273, 1983.

\bibitem[Neudecker(1969)]{Neudecker1969}
H.~Neudecker.
\newblock Some theorems on matrix differentiation with special reference to
  {K}ronecker matrix products.
\newblock \emph{Journal of American Statistical Association}, 64\penalty0
  (327):\penalty0 953--962, 1969.

\bibitem[Noakes et~al.(1985)Noakes, Mc{L}eod, and Hipel]{Noakes1985}
D.~J. Noakes, A.~I. Mc{L}eod, and K.~W. Hipel.
\newblock Forecasting seasonal hydrological time series.
\newblock \emph{The International Journal of Forecasting}, 1\penalty0
  (1):\penalty0 179--190, 1985.

\bibitem[Pe\v{n}a and Rodr\'{\i}guez(2002)]{PR2002}
D.~Pe\v{n}a and J.~Rodr\'{\i}guez.
\newblock A powerful portmanteau test of lack of test for time series.
\newblock \emph{Journal of American Statistical Association}, 97\penalty0
  (458):\penalty0 601--610, 2002.

\bibitem[Pe\v{n}a and Rodr\'{\i}guez(2006)]{PR2006}
D.~Pe\v{n}a and J.~Rodr\'{\i}guez.
\newblock The log of the determinant of the autocorrelation matrix for testing
  goodness of fit in time series.
\newblock \emph{Journal of Statistical Planning and Inference}, 8\penalty0
  (136):\penalty0 2706--2718, 2006.

\bibitem[Poskitt and Tremayne(1982)]{Poskitt1982}
D.~S. Poskitt and A.~R. Tremayne.
\newblock Diagnostic tests for multiple time series models.
\newblock \emph{Annals of Statistics}, 6\penalty0 (1):\penalty0 114--120, 1982.

\bibitem[Reinsel(1997)]{Reinsel1997}
G.~C. Reinsel.
\newblock \emph{Elements of Multivariate Time Series Analysis}.
\newblock Springer-Verlag, New York, 2nd edition, 1997.

\bibitem[Reinsel et~al.(1992)Reinsel, Basu, and Yap]{Reinsel1992}
G.~C. Reinsel, S.~Basu, and S.~F. Yap.
\newblock Maximum likelihood estimators in the multivariate autoregressive
  moving average model from a generalized least squares viewpoint.
\newblock \emph{Journal of Time Series Analysis}, 13\penalty0 (2):\penalty0
  133--145, 1992.

\bibitem[Rodr\'{\i}guez and Ruiz(2005)]{Rodriguez2005}
J.~Rodr\'{\i}guez and E.~Ruiz.
\newblock A powerful test for conditional heteroscedasticity for financial time
  series with highly persistent volatilities.
\newblock \emph{Statistica Sinica}, 15:\penalty0 505--525, 2005.

\bibitem[Schmidberger et~al.(2009)Schmidberger, Morgan, Eddelbuettel, Yu,
  Tierney, and Mansmann]{Schmidberger2009}
Markus Schmidberger, Martin Morgan, Dirk Eddelbuettel, Hao Yu, Luke Tierney,
  and Ulrich Mansmann.
\newblock State of the art in parallel computing with \proglang{R}.
\newblock \emph{Journal of Statistical Software}, 31\penalty0 (1):\penalty0
  1--27, 2009.
\newblock URL \url{http://www.jstatsoft.org/v31/i01}.

\bibitem[Seber(2008)]{Seber2008}
G.~A.~F. Seber.
\newblock \emph{A Matrix Handbook for Statisticians}.
\newblock Wiley, New York, 2008.

\bibitem[Tsay(2010)]{Tsay2010}
R.~S. Tsay.
\newblock \emph{Analysis of Financial Time Series}.
\newblock Wiley, New York, 3rd edition, 2010.
\newblock 2nd edition, 2005.

\bibitem[Weiss(1984)]{Weiss1984}
A.~A. Weiss.
\newblock {ARMA} models with {ARCH} errors.
\newblock \emph{Journal of Time Series Analysis}, 5\penalty0 (2):\penalty0
  129--143, 1984.

\end{thebibliography}
\end{document}